\def\diag{\mathop{\rm diag}\nolimits}
\def\tr{\mathop{\rm tr}\nolimits}
\def\R{\mathop{\rm Re}\nolimits}
\def\etr{\mathop{\rm etr}\nolimits}
\newcommand{\Half}{\mbox{$\frac{1}{2}$}}
\renewenvironment{abstract}
                 {\vspace{6pt}
                  \begin{center}
                  \begin{minipage}{5in}
                  \centerline{\textbf{Abstract}}
                  \noindent\ignorespaces
                 }
                 {\end{minipage}\end{center}}
\newtheorem{thm}{\textbf{Theorem}}[section]
\newtheorem{cor}{\textbf{Corollary}}[section]
\newtheorem{lem}{\textbf{Lemma}}[section]
\theoremstyle{definition}
\newtheorem{defn}{\textbf{Definition}}[section]
\title{\huge \textbf{Doubly noncentral singular matrix variate beta distributions}}
\author{
  \textbf{Jos\'e A. D\'{\i}az-Garc\'{\i}a} \thanks{Corresponding author\newline
   {\bf Key words.} Random matrices, noncentral distribution, matrix variate beta, singular distribution.\newline
    2000 Mathematical Subject Classification. 62E15, 15A52}\\
  Department of Statistics and Computation \\
  25350 Buenavista, Saltillo, Coahuila, Mexico \\
  E-mail: jadiaz@uaaan.mx \\[2ex]
  \textbf{Ram\'on Guti\'errez J\'aimez} \\
  Department of Statistics and O.R. \\
  University of Granada \\
  Granada 18071, Spain \\
  E-mail: rgjaimez@ugr.es\\
}
\date{}
\begin{document}
\maketitle
\begin{abstract}
In this paper, we determine the density functions of doubly noncentral singular
matrix variate beta type I and II distributions.
\end{abstract}

\section{Introduction}

Matrix variate beta type I and II distributions have been studied by many authors
using different definitions in the nonsingular case, see \cite{k:61}, \cite{c:63},
\cite{or:64}, \cite{j:64}, \cite{s:68}, \cite{k:70}, \cite{sk:79}, \cite{c:96} and
\cite{dggj:07}, among many others. Recently, these distributions have been studied in
the singular cases by \cite{u:94}, \cite{dg:97} and \cite{dggj:08a}. Beta type I and
II distributions play a very important role in several areas of multivariate
statistics, such as canonical correlation analysis, the general linear hypothesis in
MANOVA and shape theory, see \cite{mh:82}, \cite{s:68} and \cite{gm:93}.

In the nonsingular case in particular in the nonsingular case, the doubly noncentral
density functions of matrix variate beta type I and II distributions have been
studied by diverse authors, but with special emphasis on \textbf{symmetrised} density
functions and their application to the theory of matrix variate distribution, the
multivariate Behrens-Fisher problem and the generalised regression coefficient, see
\cite{ch:80,ch:81}, \cite{da:80}, \cite{chd:86} and \cite{dggj:08a, dgjgj:06}.

Using Greenacre's definition of the symmetrised density function \citep{g:73}, in an
inverse way, we obtain the doubly noncentral \textbf{nonsymmetrised} density
functions or simply, the doubly noncentral density functions of the singular matrix
variate beta type I and II distributions, see Section \ref{sec2}. Moreover, as
particular cases we find the noncentral density function of the singular matrix
variate beta type I and II distributions, from where we resolve, indirectly, the
integral proposed by \cite{c:63}, discussed by \cite{k:70} and reconsidered in
\cite[p. 191]{f:85}, see also \cite{dggj:07}, in singular and nonsingular cases.

\section{Preliminary results}\label{sec1}

In this section we give some definitions and notations for the singular matrix
variate beta type I and II distribution. We also include two results for the
symmetrised function and invariant polynomials with matrix arguments.

\subsection{Singular beta distributions}

Consider the following definition and notation.

Let $\mathbf{C}$ be a non-negative definite $m \times m$; then
$\mathbf{C}^{1/2}(\mathbf{C}^{1/2})' = \mathbf{C}$ is a reasonable nonsingular
factorization of $\mathbf{C}$, and in particular $\mathbf{C}^{1/2}$ can be $m \times
m$ upper-triangular matrix or an $m \times m$ non-negative definite square root, see
\cite{gn:00}, \cite{sk:79} and \cite{mh:82}.

Let $\mathbf{A}$ be an $m \times m$ non-negative definite random matrix with
Pseudo-Wishart distribution with $r$ degrees of freedom and a symmetric matrix of
parameters $\mathbf{\Sigma}$. We then state that $\mathbf{A} \sim \mathcal{PW}_{m}(r,
\mathbf{\Sigma})$, $\R(r) \leq (m-1)$. If $\R(r) > (m-1)$ then $\mathbf{A}$ is said
to have a Wishart distribution, with $\mathbf{A} \sim
\mathcal{W}_{m}(s,\mathbf{\Sigma})$, see \cite[p. 82]{mh:82}, \cite{u:94} and
\cite{dg:97}.

\begin{defn}\label{defbI}
Let $\mathbf{A}$ and $\mathbf{B}$ be independent, where $\mathbf{A} \sim
\mathcal{PW}_{m}(r,\mathbf{I})$ and $\mathbf{B} \sim \mathcal{W}_{m}(s,\mathbf{I})$.
We define $\mathbf{U} = (\mathbf{A} + \mathbf{B})^{-1/2}\mathbf{A} ((\mathbf{A} +
\mathbf{B})^{-1/2})'$. Then its density function is given and denoted as (see
\cite{dg:97})
\begin{equation}\label{beta}\hspace{-.5cm}
    \mathcal{B}I_{m}(\mathbf{U};q, r/2,s/2) = c |\mathbf{L}|^{(r -m -1)/2} |\mathbf{I}_{m} -
    \mathbf{U}|^{(s -m -1)/2} (d\mathbf{U}), \ \ \mathbf{0} \leq \mathbf{U} <
    \mathbf{I}_{m}.
\end{equation}
$\mathbf{U}$ is said to have a singular matrix variate beta type I distribution, and
this is denoted as $\mathbf{U} \sim \mathcal{B}I_{m}(q,r/2,s/2)$, $\R(s)
> (m-1)$; where $\mathbf{U} = \mathbf{H}_{1}\mathbf{L} \mathbf{H}'_{1}$, with
$\mathbf{H}_{1} \in \mathcal{V}_{q,m}$; $\mathcal{V}_{q,m}= \{\mathbf{H}_{1} \in
\Re^{m \times q}| \mathbf{H}'_{1}\mathbf{H}_{1} = \mathbf{I}_{q}\}$ denotes the
Stiefel manifold; $\mathbf{L} = \diag(l_{1}, \dots,l_{q})$, $1>l_{1}> \cdots > l_{q}
> 0$; $q = m$ (nonsingular case) or $q = r < m$ (singular case);
\begin{equation}\label{cte}
    c = \frac{\pi^{(-mr + rq)/2}\Gamma_{m}[(r+s)/2]}{\Gamma_{q}[r/2]
     \Gamma_{m}[s/2]}.
\end{equation}
$(d\mathbf{U})$ denotes the Hausdorff measure on $(mq-q(q-1)/2)$-dimensional manifold
of rank-q positive semidefinite $m \times m$ matrices $\mathbf{U}$ with $q$ distinct
nonnull eigenvalues, given by (see \cite{u:94} and \cite{dg:97})
\begin{equation}\label{medida}
    (d\mathbf{U}) = 2^{-q} \prod_{i=1}^{q}l_{i}^{m-q}\prod_{i<j}(l_{i} - l_{j})
    \left(\bigwedge_{i=1}^{q}dl_{i}\right) \wedge (\mathbf{H}'_{1}d\mathbf{H}_{1}),
\end{equation}
where $(\mathbf{H}'_{1}d\mathbf{H}_{1})$ denotes the invariant measure on
$\mathcal{V}_{q,m}$ and where $\Gamma_{m}[a]$ denotes the multivariate gamma
function, this being defined as
$$
  \Gamma_{m}[a] = \int_{\mathbf{R}>0} \etr(-\mathbf{R}) |\mathbf{R}|^{a-(m+1)/2} (d\mathbf{R}),
$$
$\R(a) > (m-1)/2$ and $\etr(\cdot) \equiv \exp(\tr(\cdot))$.
\end{defn}

Similarly, we have:

\begin{defn}\label{defbII}
Let $\mathbf{A} \sim \mathcal{PW}_{m}(r,\mathbf{I})$ and $\mathbf{B} \sim
\mathcal{W}_{m}(s,\mathbf{I})$ be independent. Furthermore, let $\mathbf{F} =
\mathbf{B}^{-1/2}\mathbf{A} (\mathbf{B}^{-1/2})'$. $\mathbf{F}$ is then said to have
a singular matrix variate beta type II distribution, denoted by $\mathbf{F} \sim
\mathcal{B}II_{m}(q,r/2,s/2)$ and if $\mathbf{F} = \mathbf{H}_{1}\mathbf{G}
\mathbf{H}'_{1}$, with $\mathbf{H}_{1} \in \mathcal{V}_{q,m}$ and $\mathbf{G} =
\diag(g_{1}, \dots, g_{q})$; $g_{1} > \cdots > g_{q} >0$, its density function is
given and denoted as (see \cite{dg:97})
\begin{equation}\label{efe}
    \mathcal{B}II_{m}(\mathbf{F};q,r/2,s/2)=c   |\mathbf{G}|^{(r-m-1)/2}|\mathbf{I}_{m}+\mathbf{F}|^{-(r+s)/2}
    (d\mathbf{F}), \quad \mathbf{F} \geq \mathbf{0}.
\end{equation}
where $c$ is given by (\ref{cte}), $\R(s) > (m-1)$ and $(d\mathbf{F})$ is given in an
analogous form to (\ref{medida}).
\end{defn}

Let us now extend these ideas to the doubly noncentral case, i.e. when $\mathbf{A}
\sim \mathcal{PW}_{m}(r,\mathbf{I}, \mathbf{\Omega}_{1})$ and $\mathbf{B} \sim
\mathcal{W}_{m}(s,\mathbf{I}, \mathbf{\Omega_{2}})$. In other words, $\mathbf{A}$ has
a noncentral Pseudo-Wishart distribution with a matrix of noncentrality parameters
$\mathbf{\Omega}_{1}$ and $\mathbf{B}$ has a noncentral Wishart distribution with a
matrix of noncentrality parameters $\mathbf{\Omega}_{2}$, see \cite{dgm:97}.
Subsequently, \cite{dggj:08a} reported the following:

\begin{lem}\label{lem2}
Suppose that $\mathbf{U}$ has a doubly noncentral matrix singular variate beta type
I, which is denoted as $\mathbf{U} \sim \mathcal{B}I_{m}(q,r/2,s/2,
\mathbf{\Omega}_{1}, \mathbf{\Omega}_{2})$. Then using the notation for the operator
sum as in \cite{da:80} its \textbf{symmetrised} density function is found to be\\[2ex]
$
 dF_{s}(\mathbf{U}) =  \mathcal{B}I_{m}(\mathbf{U};q,r/2,s/2) \etr\left(-\Half(
          \mathbf{\Omega}_{1}+ \mathbf{\Omega}_{2})\right)
$
\par\indent\hfill\mbox{$\displaystyle{\times \ \
    \sum_{\kappa,\lambda; \ \phi}^{\infty}\frac{\left(\Half(r+s)\right)_{\phi}}{\left(\Half r
    \right)_{\kappa}\left(\Half s \right)_{\lambda} k! \ l!} \frac{C_{\phi}^{\kappa,\lambda}
    (\Half \mathbf{\Omega}_{1}, \Half \mathbf{\Omega}_{2}) C_{\phi}^{\kappa,\lambda} (\mathbf{U}, (\mathbf{I}-
    \mathbf{U}) )}{C_{\phi}(\mathbf{I})}(d\mathbf{U})}$},
\par\noindent
with $\mathbf{0} \leq \mathbf{U} < \mathbf{I}$, $\R(s) > (m-1)$, $(a)_{\tau}$ is the
generalised hypergeometric coefficient or product of Poch\-hammer symbols and
$C_{\phi}^{\kappa,\lambda}(\cdot,\cdot)$ denotes the invariant polynomials with
matrix arguments defined in \cite{da:80}, see also \cite{ch:80} and \cite{chd:86}.
\end{lem}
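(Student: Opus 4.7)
The plan is to obtain the density of $\mathbf{U}$ from the joint density of the underlying $(\mathbf{A},\mathbf{B})$, and then apply Greenacre's symmetrisation by means of the invariant-polynomial identities of \cite{da:80} and \cite{chd:86}. First, writing $f_{\mathbf{A},\mathbf{B}}$ as the product of the noncentral Pseudo-Wishart density of $\mathbf{A}$ and the noncentral Wishart density of $\mathbf{B}$ (by independence), the noncentrality contributes factors of the form $\etr(-\tfrac{1}{2}\mathbf{\Omega}_{i})\,{}_{0}F_{1}(a_{i};\tfrac{1}{4}\mathbf{\Omega}_{i}\mathbf{X})$. I would expand each ${}_{0}F_{1}$ as a series of zonal polynomials, producing the denominators $(r/2)_{\kappa}\,k!$ and $(s/2)_{\lambda}\,l!$ that already match those appearing in the statement.

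Next, applying the change of variables $\mathbf{T}=\mathbf{A}+\mathbf{B}$, $\mathbf{A}=\mathbf{T}^{1/2}\mathbf{U}(\mathbf{T}^{1/2})'$, $\mathbf{B}=\mathbf{T}^{1/2}(\mathbf{I}-\mathbf{U})(\mathbf{T}^{1/2})'$, with the singular-case Jacobian and Hausdorff measure of \cite{dg:97}, the integrand separates into the nonhypergeometric part $|\mathbf{L}|^{(r-m-1)/2}|\mathbf{I}-\mathbf{U}|^{(s-m-1)/2}\etr(-\tfrac{1}{2}\mathbf{T})\,|\mathbf{T}|^{(r+s-m-1)/2}$ and a $\mathbf{T}$-entangled pair of zonal polynomials $C_{\kappa}\bigl(\tfrac{1}{4}\mathbf{\Omega}_{1}\mathbf{T}^{1/2}\mathbf{U}(\mathbf{T}^{1/2})'\bigr)$, $C_{\lambda}\bigl(\tfrac{1}{4}\mathbf{\Omega}_{2}\mathbf{T}^{1/2}(\mathbf{I}-\mathbf{U})(\mathbf{T}^{1/2})'\bigr)$. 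I then apply the orthogonal averaging $\mathbf{U}\mapsto\mathbf{H}\mathbf{U}\mathbf{H}'$, $\mathbf{H}\in O(m)$, underlying Greenacre's symmetrisation: after using the zonal commutation $C_{\tau}(\mathbf{A}\mathbf{B})=C_{\tau}(\mathbf{B}\mathbf{A})$ to bring $\mathbf{H}\mathbf{U}\mathbf{H}'$ and $\mathbf{H}(\mathbf{I}-\mathbf{U})\mathbf{H}'$ to the rightmost positions, Davis's fundamental identity
$$\int_{O(m)} C_{\kappa}(\mathbf{X}_{1}\mathbf{H}\mathbf{U}\mathbf{H}')\,C_{\lambda}(\mathbf{X}_{2}\mathbf{H}(\mathbf{I}-\mathbf{U})\mathbf{H}')\,(d\mathbf{H}) = \sum_{\phi\in\kappa\cdot\lambda} \frac{C_{\phi}^{\kappa,\lambda}(\mathbf{X}_{1},\mathbf{X}_{2})\,C_{\phi}^{\kappa,\lambda}(\mathbf{U},\mathbf{I}-\mathbf{U})}{C_{\phi}(\mathbf{I})}$$
detaches $\mathbf{U},\mathbf{I}-\mathbf{U}$ from the remaining $\mathbf{T},\mathbf{\Omega}_{i}$ dependence.

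Finally, the $\mathbf{T}$-integration is carried out by the extended multivariate gamma integral for invariant polynomials; using $C_{\tau}(\mathbf{A}\mathbf{B}\mathbf{A}')=C_{\tau}(\mathbf{B}\mathbf{A}'\mathbf{A})$ reduces $\tfrac{1}{4}(\mathbf{T}^{1/2})'\mathbf{\Omega}_{i}\mathbf{T}^{1/2}$ to $\tfrac{1}{4}\mathbf{\Omega}_{i}\mathbf{T}$, so that
$$\int_{\mathbf{T}>0} \etr(-\tfrac{1}{2}\mathbf{T})\,|\mathbf{T}|^{a-(m+1)/2}\,C_{\phi}^{\kappa,\lambda}(\mathbf{X}_{1}\mathbf{T},\mathbf{X}_{2}\mathbf{T})\,(d\mathbf{T}) = 2^{ma}(a)_{\phi}\,\Gamma_{m}[a]\,C_{\phi}^{\kappa,\lambda}(\mathbf{X}_{1},\mathbf{X}_{2})$$
with $a=(r+s)/2$ produces the Pochhammer factor $(\tfrac{r+s}{2})_{\phi}$ and the invariant polynomial $C_{\phi}^{\kappa,\lambda}(\tfrac{1}{2}\mathbf{\Omega}_{1},\tfrac{1}{2}\mathbf{\Omega}_{2})$ required by the claim; the constants collapse to the normalisation $c$ of~(\ref{cte}). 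The main obstacle is the careful handling of the singular Hausdorff measure $(d\mathbf{U})$ from~(\ref{medida}) together with the $\mathbf{T}^{1/2}$ transformation in the rank-$q$ regime: one must verify that both Davis's orthogonal-group integral and the extended gamma integral for invariant polynomials, originally established in the nonsingular setting, remain valid in the singular case, which is where the framework of \cite{dg:97} and the approach of \cite{dggj:08a} are essential. A secondary technical point is that $|\mathbf{A}|$ must be consistently replaced by $|\mathbf{L}|$ (the product of nonnull eigenvalues of $\mathbf{A}$) in every step of the derivation.
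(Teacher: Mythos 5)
The first thing to note is that the paper offers no proof of Lemma \ref{lem2} to compare against: the lemma is stated as a quoted result imported from \cite{dggj:08a} (the sentence immediately preceding it reads ``Subsequently, \cite{dggj:08a} reported the following''), and the present paper's own contribution, Theorems \ref{teo1} and \ref{teo2}, takes this symmetrised density as its starting point. Your sketch must therefore be judged against the derivation in that reference, which is the singular-case adaptation of the argument of \cite{ch:80}, and your outline does reproduce it faithfully: expand the two ${}_{0}F_{1}$ noncentrality factors in zonal polynomials, pass to $(\mathbf{T},\mathbf{U})$ with the singular Jacobian of \cite{dg:97}, average over $\mathcal{O}(m)$ so that Davis's splitting integral detaches $\mathbf{U}$ from $\mathbf{T}$, and then Laplace-transform in $\mathbf{T}$. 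Two remarks. First, you are right that the $\mathcal{O}(m)$ average must precede the $\mathbf{T}$-integration: before symmetrisation the arguments $\mathbf{T}^{1/2}\mathbf{U}(\mathbf{T}^{1/2})'$ do not have the product form $\mathbf{X}\mathbf{T}$ required by the extended gamma integral, which is exactly why only the \emph{symmetrised} density is obtainable this way --- and why the present paper must then invert the symmetrisation to get Theorem \ref{teo1}. Second, your displayed Laplace-transform identity is off by a homogeneity factor $2^{k+l}$ on the right-hand side: with $\mathbf{Z}=\Half\mathbf{I}$ the correct output is $C_{\phi}^{\kappa,\lambda}(2\mathbf{X}_{1},2\mathbf{X}_{2})$, not $C_{\phi}^{\kappa,\lambda}(\mathbf{X}_{1},\mathbf{X}_{2})$; your final answer $C_{\phi}^{\kappa,\lambda}\left(\Half\mathbf{\Omega}_{1},\Half\mathbf{\Omega}_{2}\right)$ silently uses the correct version. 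These are bookkeeping points rather than gaps; the genuinely delicate step is, as you yourself flag, the validity of the change-of-variables Jacobian and of the two integral identities on the rank-$q$ manifold with the Hausdorff measure (\ref{medida}), which is precisely the content supplied by \cite{dg:97} and \cite{dggj:08a}.
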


Moreover:

\begin{lem}\label{lem3}
Suppose that $\mathbf{F} \geq 0$ has a doubly noncentral singular matrix variate beta
type II, which is denoted as $\mathbf{F} \sim \mathcal{B}II_{m}(q, r/2,s/2,
\mathbf{\Omega}_{1}, \mathbf{\Omega}_{2})$. Then its \textbf{symmetrised} density function is\\[2ex]
$
  dG_{s}(\mathbf{F}) =  \mathcal{B}II_{m}(\mathbf{F};q,r/2,s/2) \etr\left(-\Half(
    \mathbf{\Omega}_{1}+ \mathbf{\Omega}_{2})\right)
$
\par\indent\hfill\mbox{$\displaystyle{\times \ \
    \sum_{\kappa,\lambda; \phi}^{\infty}\frac{\Half(r+s)_{\phi}}{\left(\Half r
    \right)_{\kappa}\left(\Half s \right)_{\lambda} k! \ l!}\frac{C_{\phi}^{\kappa,\lambda}
    (\Half \mathbf{\Omega}_{1}, \Half \mathbf{\Omega}_{2}) C_{\phi}^{\kappa,\lambda}
    ((\mathbf{I}+\mathbf{F})^{-1} \mathbf{F}, (\mathbf{I}+\mathbf{F})^{-1})}
    {C_{\phi}(\mathbf{I})}(d\mathbf{F}) },$}
\par\noindent
where $\mathbf{F} > \mathbf{0}$ and $\R(s) > (m-1)$.
\end{lem}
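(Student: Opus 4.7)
The plan is to deduce Lemma \ref{lem3} directly from Lemma \ref{lem2} by means of the standard change of variables between beta type I and beta type II matrices. Since $\mathbf{A}$ and $\mathbf{B}$ are the same in both definitions, if $\mathbf{U} \sim \mathcal{B}I_{m}(q,r/2,s/2,\mathbf{\Omega}_{1},\mathbf{\Omega}_{2})$ with $\mathbf{U} = \mathbf{H}_{1}\mathbf{L}\mathbf{H}_{1}'$, then $\mathbf{F} = \mathbf{B}^{-1/2}\mathbf{A}(\mathbf{B}^{-1/2})'$ shares the same Stiefel factor $\mathbf{H}_{1} \in \mathcal{V}_{q,m}$, and its nonzero eigenvalues $g_{i}$ satisfy $g_{i} = l_{i}/(1-l_{i})$, equivalently $l_{i} = g_{i}/(1+g_{i})$. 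In matrix form the relation is $\mathbf{U} = (\mathbf{I}+\mathbf{F})^{-1}\mathbf{F}$ and $\mathbf{I}-\mathbf{U} = (\mathbf{I}+\mathbf{F})^{-1}$, where determinants and inverses are understood on the range of $\mathbf{H}_{1}$.

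First I would substitute $l_{i} = g_{i}/(1+g_{i})$ into the Hausdorff measure (\ref{medida}). Noting $l_{i}/g_{i} = (1+g_{i})^{-1}$, $(l_{i}-l_{j}) = (g_{i}-g_{j})\left[(1+g_{i})(1+g_{j})\right]^{-1}$, and $dl_{i} = (1+g_{i})^{-2}dg_{i}$, while $\mathbf{H}_{1}$ is unchanged, the collected exponents give
\begin{equation*}
(d\mathbf{U}) = \prod_{i=1}^{q}(1+g_{i})^{-(m-q)-(q-1)-2}\,(d\mathbf{F}) = |\mathbf{I}_{m}+\mathbf{F}|^{-(m+1)}(d\mathbf{F}),
\end{equation*}
interpreting $|\mathbf{I}_{m}+\mathbf{F}| = \prod_{i=1}^{q}(1+g_{i})$ as in the argument using $\mathbf{I}_{m} = \mathbf{H}_{1}\mathbf{H}_{1}' + \mathbf{H}_{2}\mathbf{H}_{2}'$ which shows $|\mathbf{I}_{m}-\mathbf{U}| = \prod(1-l_{i}) = |\mathbf{I}_{m}+\mathbf{F}|^{-1}$ and $|\mathbf{L}| = |\mathbf{G}|\,|\mathbf{I}_{m}+\mathbf{F}|^{-1}$.

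Next I would replace these factors in the symmetrised density of Lemma \ref{lem2}. Combining
\begin{equation*}
|\mathbf{L}|^{(r-m-1)/2}|\mathbf{I}_{m}-\mathbf{U}|^{(s-m-1)/2}(d\mathbf{U}) = |\mathbf{G}|^{(r-m-1)/2}|\mathbf{I}_{m}+\mathbf{F}|^{-\alpha}(d\mathbf{F}),
\end{equation*}
where $\alpha = (r-m-1)/2 + (s-m-1)/2 + (m+1) = (r+s)/2$, reproduces exactly the central density $\mathcal{B}II_{m}(\mathbf{F};q,r/2,s/2)$ with the same constant $c$ of (\ref{cte}). For the invariant polynomial factor, the substitution $\mathbf{U} = (\mathbf{I}+\mathbf{F})^{-1}\mathbf{F}$ and $\mathbf{I}-\mathbf{U} = (\mathbf{I}+\mathbf{F})^{-1}$ immediately yields
\begin{equation*}
C_{\phi}^{\kappa,\lambda}\!\left(\mathbf{U},\mathbf{I}-\mathbf{U}\right) = C_{\phi}^{\kappa,\lambda}\!\left((\mathbf{I}+\mathbf{F})^{-1}\mathbf{F},(\mathbf{I}+\mathbf{F})^{-1}\right),
\end{equation*}
while the rest of the series (the exponentials, Pochhammer ratios and the polynomials in $\tfrac{1}{2}\mathbf{\Omega}_{i}$) is untouched. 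Assembling these pieces produces the claimed expression for $dG_{s}(\mathbf{F})$.

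The main obstacle is bookkeeping in the singular setting: one must verify that the Stiefel factor $\mathbf{H}_{1}$ really is shared between $\mathbf{U}$ and $\mathbf{F}$, that $|\mathbf{I}_{m}-\mathbf{U}|$ and $|\mathbf{I}_{m}+\mathbf{F}|$ reduce to products over the $q$ nonzero eigenvalues, and that the exponent count in the Jacobian matches the difference of the beta type I and beta type II exponents of $|\mathbf{I}_{m}+\mathbf{F}|$ so that $(m+1)$ cancels cleanly against $(r+s)/2$. Once these are checked, the rest is a mechanical rewriting and no convergence or interchange-of-sum issues arise beyond those already implicit in Lemma \ref{lem2}.
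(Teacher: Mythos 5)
The paper does not actually prove Lemma \ref{lem3}: both Lemma \ref{lem2} and Lemma \ref{lem3} are imported verbatim from D\'{\i}az-Garc\'{\i}a and Guti\'errez-J\'aimez (2008a), so there is no internal argument to compare against. Your derivation of Lemma \ref{lem3} from Lemma \ref{lem2} by the change of variables $\mathbf{U}=(\mathbf{I}+\mathbf{F})^{-1}\mathbf{F}$ is a legitimate self-contained route, and the bookkeeping is right: $|\mathbf{L}|=|\mathbf{G}|\,|\mathbf{I}_{m}+\mathbf{F}|^{-1}$, $|\mathbf{I}_{m}-\mathbf{U}|=|\mathbf{I}_{m}+\mathbf{F}|^{-1}$, the measure picks up $\prod_i(1+g_i)^{-(m-q)-(q-1)-2}=|\mathbf{I}_{m}+\mathbf{F}|^{-(m+1)}$, and the exponents assemble to $-(r+s)/2$ exactly as in (\ref{efe}), while $C_{\phi}^{\kappa,\lambda}(\mathbf{U},\mathbf{I}-\mathbf{U})$ transcribes directly. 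One point you should state more carefully: it is not literally true that $\mathbf{U}=(\mathbf{A}+\mathbf{B})^{-1/2}\mathbf{A}((\mathbf{A}+\mathbf{B})^{-1/2})'$ and $\mathbf{F}=\mathbf{B}^{-1/2}\mathbf{A}(\mathbf{B}^{-1/2})'$ share the Stiefel factor $\mathbf{H}_{1}$; the matrices $(\mathbf{I}+\mathbf{F})^{-1}\mathbf{F}$ and $\mathbf{U}$ are only similar (hence orthogonally conjugate, having the same eigenvalues), not equal. What your argument genuinely needs, and what does hold, is (a) that the map $\mathbf{F}\mapsto(\mathbf{I}+\mathbf{F})^{-1}\mathbf{F}$ preserves the factor $\mathbf{H}_{1}$ when applied to $\mathbf{F}=\mathbf{H}_{1}\mathbf{G}\mathbf{H}_{1}'$, and (b) that at the level of \emph{symmetrised} (orthogonally invariant) densities the orthogonal conjugation is absorbed by the Haar average, so the symmetrised law of $\mathbf{F}$ is indeed the push-forward of the symmetrised law of $\mathbf{U}$ under this map. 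With that sentence repaired, the proof is sound, and it has the merit of making the two quoted lemmas consistent with each other rather than relying on two separate external derivations.
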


\subsection{Symmetrised function and invariant polynomials with matrix arguments}

Consider the follow extension of the definition given by \cite{g:73}, see also
\cite{r:75}:

\begin{defn}
The symmetrised density function of the non-negative definite matrix $\mathbf{X}:m
\times m$, which has a density function $f_{_{\mathbf{X}}}(\mathbf{X})$, is defined
as
\begin{equation}\label{fs}
    f_{s}(\mathbf{X}) = \int_{\mathcal{O}(m)}f_{_{\mathbf{X}}}(\mathbf{HXH}')(d\mathbf{H}),
    \quad \mathbf{H} \in \mathcal{O}(m)
\end{equation}
where $\mathcal{O}(m) = \{\mathbf{H} \in \Re^{m \times m}| (\mathbf{HH}') =
(\mathbf{H}'\mathbf{H}) = \mathbf{I}_{m}\}$ and $(d\mathbf{H})$ denotes the
normalised invariant measure on $\mathcal{O}(m)$, then
$$
  \int_{\mathcal{O}(m)}(d\mathbf{H}) = 1
$$
\citep[p. 72]{mh:82}.
\end{defn}

Now consider the following theorem, which generalises eq. (5.4) of \cite{da:80}, and
proof of which is given by \cite{dg:06}.

\begin{lem}\label{lem:gen5.4}Let $\mathbf{A}$, $\mathbf{B}$, $\mathbf{X}$ and $\mathbf{Y}$ be $m\times m$
symmetric matrices, then we have
\begin{eqnarray}\label{gen5.4}
    \int_{\mathcal{O}(m)}C_{\phi}^{\kappa,\lambda}(\mathbf{AH}'\mathbf{XH},\mathbf{BH}'\mathbf{YH})
    (d\mathbf{H}) = \frac{C_{\phi}^{\kappa,\lambda}(\mathbf{A},\mathbf{B})C_{\phi}^{\kappa,\lambda}
    (\mathbf{X},\mathbf{Y})}{\theta_{\phi}^{\kappa,\lambda} C_{\phi}(\mathbf{I})},
\end{eqnarray}
with
$$
  \theta_{\phi}^{\kappa,\lambda} =
  \frac{C_{\phi}^{\kappa,\lambda}(\mathbf{I},\mathbf{I})}{C_{\phi}(\mathbf{I})}.
$$
\end{lem}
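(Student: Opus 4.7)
The plan is to determine
\[
I(\mathbf{A},\mathbf{B};\mathbf{X},\mathbf{Y}) := \int_{\mathcal{O}(m)} C_\phi^{\kappa,\lambda}(\mathbf{A}\mathbf{H}'\mathbf{X}\mathbf{H},\,\mathbf{B}\mathbf{H}'\mathbf{Y}\mathbf{H})(d\mathbf{H})
\]
by first showing on general grounds that $I$ must be a bilinear combination of products $C_{\phi'}^{\kappa,\lambda}(\mathbf{A},\mathbf{B})\,C_{\phi''}^{\kappa,\lambda}(\mathbf{X},\mathbf{Y})$, and then pinning down the scalars by specialising so as to reduce to the original Davis eq.~(5.4).

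First, I would combine the bi-invariance of the normalised Haar measure $(d\mathbf{H})$ with the joint orthogonal invariance $C_\phi^{\kappa,\lambda}(\mathbf{Q}'\mathbf{M}_1\mathbf{Q},\mathbf{Q}'\mathbf{M}_2\mathbf{Q}) = C_\phi^{\kappa,\lambda}(\mathbf{M}_1,\mathbf{M}_2)$ of the invariant polynomial, for any $\mathbf{Q}\in\mathcal{O}(m)$. The substitution $\mathbf{H}\mapsto\mathbf{Q}\mathbf{H}$ gives $I(\mathbf{A},\mathbf{B};\mathbf{X},\mathbf{Y}) = I(\mathbf{A},\mathbf{B};\mathbf{Q}'\mathbf{X}\mathbf{Q},\mathbf{Q}'\mathbf{Y}\mathbf{Q})$, while $\mathbf{H}\mapsto\mathbf{H}\mathbf{Q}$ followed by the joint orthogonal invariance of $C_\phi^{\kappa,\lambda}$ yields $I(\mathbf{A},\mathbf{B};\mathbf{X},\mathbf{Y}) = I(\mathbf{Q}\mathbf{A}\mathbf{Q}',\mathbf{Q}\mathbf{B}\mathbf{Q}';\mathbf{X},\mathbf{Y})$. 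Thus $I$ is separately joint-$\mathcal{O}(m)$-invariant in the pair $(\mathbf{A},\mathbf{B})$ and in the pair $(\mathbf{X},\mathbf{Y})$. Because $C_\phi^{\kappa,\lambda}$ is bihomogeneous of bidegree $(|\kappa|,|\lambda|)$ in its two arguments, so is $I$ in each pair. Invoking the standard characterisation that $\{C_{\phi'}^{\kappa,\lambda}:\phi'\in\kappa\cdot\lambda\}$ spans the space of bihomogeneous, jointly $\mathcal{O}(m)$-invariant polynomials of bidegree $(|\kappa|,|\lambda|)$ in a pair of symmetric matrices, I can therefore write
\[
I(\mathbf{A},\mathbf{B};\mathbf{X},\mathbf{Y}) = \sum_{\phi',\phi''\in\kappa\cdot\lambda}a_{\phi'\phi''}\,C_{\phi'}^{\kappa,\lambda}(\mathbf{A},\mathbf{B})\,C_{\phi''}^{\kappa,\lambda}(\mathbf{X},\mathbf{Y})
\]
for some scalars $a_{\phi'\phi''}$.

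The main obstacle is fixing these scalars. I would specialise $\mathbf{A}=\mathbf{B}$: the left-hand integral then becomes the coalesced case that Davis already handled in eq.~(5.4), giving a closed form of the shape $C_\phi(\mathbf{A})\,C_\phi^{\kappa,\lambda}(\mathbf{X},\mathbf{Y})/C_\phi(\mathbf{I})$. The right-hand expansion, using the standard identity $C_{\phi'}^{\kappa,\lambda}(\mathbf{A},\mathbf{A}) = \theta_{\phi'}^{\kappa,\lambda}C_{\phi'}(\mathbf{A})$ (which at $\mathbf{A}=\mathbf{I}$ recovers the definition of $\theta_{\phi'}^{\kappa,\lambda}$ given in the statement), collapses to
\[
\sum_{\phi',\phi''}a_{\phi'\phi''}\,\theta_{\phi'}^{\kappa,\lambda}\,C_{\phi'}(\mathbf{A})\,C_{\phi''}^{\kappa,\lambda}(\mathbf{X},\mathbf{Y}) = \frac{C_\phi(\mathbf{A})\,C_\phi^{\kappa,\lambda}(\mathbf{X},\mathbf{Y})}{C_\phi(\mathbf{I})}.
\]
Linear independence of the zonal polynomials $\{C_{\phi'}(\mathbf{A})\}$ and of the invariant polynomials $\{C_{\phi''}^{\kappa,\lambda}(\mathbf{X},\mathbf{Y})\}$ then forces $a_{\phi'\phi''}=0$ unless $\phi'=\phi''=\phi$, and isolates the surviving value $a_{\phi\phi} = 1/[\theta_\phi^{\kappa,\lambda}\,C_\phi(\mathbf{I})]$, which is exactly the scalar in the stated identity. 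The technical burden is thus twofold: justifying the completeness used in the expansion step, and invoking the correct variant of Davis's eq.~(5.4); once those are in place, the identification of coefficients is mechanical.
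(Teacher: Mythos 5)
Your overall strategy---use the bi-invariance of the Haar measure to show the integral is a jointly $\mathcal{O}(m)$-invariant bihomogeneous polynomial in each pair, expand it in invariant polynomials, and pin down the coefficients by coalescing arguments so as to fall back on Davis's eq.\ (5.4)---is a sensible one, and your two invariance computations are correct. (Note that the paper itself contains no proof of this lemma: it only cites the technical report of D\'{\i}az-Garc\'{\i}a (2006), so your argument has to stand entirely on its own.) The genuine gap is in the expansion step, and it is not merely a missing citation. The completeness statement you invoke is false as written: for a \emph{fixed} pair $(\kappa,\lambda)$ the family $\{C_{\phi'}^{\kappa,\lambda}:\phi'\in\kappa\cdot\lambda\}$ does not span the space of jointly $\mathcal{O}(m)$-invariant polynomials of bidegree $(k,l)$; Davis's basis for that space runs over \emph{all} $\kappa'\vdash k$ and $\lambda'\vdash l$ as well as $\phi'\in\kappa'\cdot\lambda'$. (Already in bidegree $(2,1)$ the invariant space is four-dimensional, spanned by $\tr(\mathbf{X})^{2}\tr(\mathbf{Y})$, $\tr(\mathbf{X}^{2})\tr(\mathbf{Y})$, $\tr(\mathbf{X})\tr(\mathbf{X}\mathbf{Y})$ and $\tr(\mathbf{X}^{2}\mathbf{Y})$, while a single product $\kappa\cdot\lambda$ contributes only two basis elements.) If you replace your claim by the correct spanning set, the coalescence $\mathbf{A}=\mathbf{B}$ no longer determines the coefficients: all triples $(\kappa',\lambda',\phi')$ sharing the same $\phi'$ collapse onto the same zonal polynomial $C_{\phi'}(\mathbf{A})$, so coefficient matching only fixes certain $\theta$-weighted sums of the unknowns, and a second coalescence $\mathbf{X}=\mathbf{Y}$ still leaves the system underdetermined. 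What your argument actually requires is the finer, representation-theoretic fact that for fixed $\mathbf{S},\mathbf{T}$ the function $(\mathbf{A},\mathbf{B})\mapsto C_{\phi}^{\kappa,\lambda}(\mathbf{A}\mathbf{S},\mathbf{B}\mathbf{T})$ lies in the span of $\{C_{\phi'}^{\kappa,\lambda}(\mathbf{A},\mathbf{B}):\phi'\in\kappa\cdot\lambda\}$ with the \emph{same} $(\kappa,\lambda)$ (and symmetrically in $(\mathbf{X},\mathbf{Y})$). That isotypic statement is precisely the substantive content of the lemma, and your proposal assumes it away.

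Two further points would need attention even after the expansion is repaired. First, Davis's eq.\ (5.4) coalesces the \emph{inner} matrices ($\mathbf{X}=\mathbf{Y}$), not the outer ones; your $\mathbf{A}=\mathbf{B}$ specialisation reduces to it only via the additional identity $C_{\phi}^{\kappa,\lambda}(\mathbf{M}\mathbf{N}_{1},\mathbf{M}\mathbf{N}_{2})=C_{\phi}^{\kappa,\lambda}(\mathbf{N}_{1}\mathbf{M},\mathbf{N}_{2}\mathbf{M})$ together with the substitution $\mathbf{H}\mapsto\mathbf{H}'$, so it is cleaner to coalesce $\mathbf{X}=\mathbf{Y}$ directly. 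Second, your conclusion that $a_{\phi'\phi''}=0$ for $\phi'\neq\phi$ is obtained by dividing by $\theta_{\phi'}^{\kappa,\lambda}$; the argument therefore silently assumes $\theta_{\phi'}^{\kappa,\lambda}\neq 0$ for \emph{every} $\phi'\in\kappa\cdot\lambda$, not just for the $\phi$ appearing in the statement, and nothing you cite guarantees this; any vanishing $\theta_{\phi'}^{\kappa,\lambda}$ would leave the corresponding coefficients undetermined by coalescence alone.
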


\section{Doubly noncentral singular matrix variate beta distributions}\label{sec2}

Taking into account equation (\ref{gen5.4}) it is now is possible to propose an
expression for the (nonsymmetrised) density functions of doubly noncentral matrix
variate beta type I and II distributions, applying the idea of \cite{g:73} (see also
\cite{r:75}), but in an inverse way.

\begin{thm}\label{teo1}
Assume that $\mathbf{U} \sim \mathcal{B}I_{m}(q,r/2,s/2, \mathbf{\Omega}_{1},
\mathbf{\Omega}_{2})$. Then its density function is\\[2ex]
$
 dF_{_{\mathbf{U}}}(\mathbf{U}) =  \mathcal{B}I_{m}(\mathbf{U};q,r/2,s/2) \etr\left(-\Half(
          \mathbf{\Omega}_{1}+ \mathbf{\Omega}_{2})\right)
$
\par\indent\hfill\mbox{$\displaystyle{\times \ \
    \sum_{\kappa,\lambda; \ \phi}^{\infty}\frac{\left(\Half(r+s)\right)_{\phi} \
    \theta_{\phi}^{\kappa,\lambda}}{\left(\Half r \right)_{\kappa}\left(\Half s \right)_{\lambda}
    k! \ l!} C_{\phi}^{\kappa,\lambda} \left(\Half \mathbf{\Omega}_{1} \mathbf{U}, \Half
    \mathbf{\Omega}_{2}(\mathbf{I}- \mathbf{U})\right)(d\mathbf{U})
    }$}
\par\noindent
with $\mathbf{0} \leq \mathbf{U} < \mathbf{I}$, $\R(s) > (m-1)$.
\end{thm}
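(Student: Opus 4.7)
The strategy is to \emph{invert} Greenacre's symmetrisation: starting from the known symmetrised density in Lemma~\ref{lem2} and reversing the identity of Lemma~\ref{lem:gen5.4}, I would reconstruct the non-symmetrised density.

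First, I observe that the prefactor $\mathcal{B}I_{m}(\mathbf{U};q,r/2,s/2)\etr(-\Half(\mathbf{\Omega}_{1}+\mathbf{\Omega}_{2}))$ common to Lemma~\ref{lem2} and to the conjectured density is invariant under $\mathbf{U}\mapsto\mathbf{H}'\mathbf{U}\mathbf{H}$ for $\mathbf{H}\in\mathcal{O}(m)$, since $\mathcal{B}I_{m}$ depends on $\mathbf{U}$ only through $|\mathbf{L}|$ and $|\mathbf{I}-\mathbf{U}|$. The problem therefore reduces to inverting the series part.

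Next, I apply Lemma~\ref{lem:gen5.4} with $\mathbf{A}=\Half\mathbf{\Omega}_{1}$, $\mathbf{B}=\Half\mathbf{\Omega}_{2}$, $\mathbf{X}=\mathbf{U}$, $\mathbf{Y}=\mathbf{I}-\mathbf{U}$, rewriting each factor in the series of Lemma~\ref{lem2} as
$$\frac{C_{\phi}^{\kappa,\lambda}(\Half\mathbf{\Omega}_{1},\Half\mathbf{\Omega}_{2})\,C_{\phi}^{\kappa,\lambda}(\mathbf{U},\mathbf{I}-\mathbf{U})}{C_{\phi}(\mathbf{I})}=\theta_{\phi}^{\kappa,\lambda}\int_{\mathcal{O}(m)}C_{\phi}^{\kappa,\lambda}\bigl(\Half\mathbf{\Omega}_{1}\mathbf{H}'\mathbf{U}\mathbf{H},\,\Half\mathbf{\Omega}_{2}\mathbf{H}'(\mathbf{I}-\mathbf{U})\mathbf{H}\bigr)(d\mathbf{H}).$$
Using $\mathbf{H}'(\mathbf{I}-\mathbf{U})\mathbf{H}=\mathbf{I}-\mathbf{H}'\mathbf{U}\mathbf{H}$ and sliding the invariant prefactor inside the $\mathbf{H}$-integral, the symmetrised density of Lemma~\ref{lem2} becomes
$$dF_{s}(\mathbf{U})=\int_{\mathcal{O}(m)}dF_{\mathbf{U}}(\mathbf{H}'\mathbf{U}\mathbf{H})(d\mathbf{H}),$$
where $dF_{\mathbf{U}}$ is exactly the candidate density of Theorem~\ref{teo1}. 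This matches the definition~(\ref{fs}) applied to $dF_{\mathbf{U}}$, so the candidate symmetrises to the correct object.

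The main obstacle is the final uniqueness step, since the symmetrisation map $f\mapsto f_{s}$ is not injective on arbitrary densities. To close the argument, I would appeal to the structural form of the true density: a direct derivation from the joint density of the independent noncentral Pseudo-Wishart $\mathbf{A}$ and Wishart $\mathbf{B}$, followed by the change of variable defining $\mathbf{U}$, yields a series expansion whose generic term is a scalar multiple of an invariant polynomial $C_{\phi}^{\kappa,\lambda}(\Half\mathbf{\Omega}_{1}\mathbf{U},\Half\mathbf{\Omega}_{2}(\mathbf{I}-\mathbf{U}))$. Within this class, Lemma~\ref{lem:gen5.4} makes the symmetrisation act diagonally on the basis: the coefficient of $C_{\phi}^{\kappa,\lambda}(\Half\mathbf{\Omega}_{1}\mathbf{U},\Half\mathbf{\Omega}_{2}(\mathbf{I}-\mathbf{U}))$ in $f$ is recovered from the coefficient of $C_{\phi}^{\kappa,\lambda}(\Half\mathbf{\Omega}_{1},\Half\mathbf{\Omega}_{2})\,C_{\phi}^{\kappa,\lambda}(\mathbf{U},\mathbf{I}-\mathbf{U})/C_{\phi}(\mathbf{I})$ in $f_{s}$ by multiplication with the nonzero constant $\theta_{\phi}^{\kappa,\lambda}$. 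Matching term by term with Lemma~\ref{lem2} then pins down the coefficients uniquely and gives the formula of Theorem~\ref{teo1}.
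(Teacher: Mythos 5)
Your proposal follows essentially the same route as the paper: it inverts Greenacre's symmetrisation term by term via Lemma \ref{lem:gen5.4}, exactly as in the published proof of Theorem \ref{teo1}. The only difference is that you explicitly flag and address the non-injectivity of the symmetrisation map --- a point the paper's proof passes over by simply declaring $h(\mathbf{U})=\theta_{\phi}^{\kappa,\lambda}\,C_{\phi}^{\kappa,\lambda}\left(\Half\mathbf{\Omega}_{1}\mathbf{U},\Half\mathbf{\Omega}_{2}(\mathbf{I}-\mathbf{U})\right)$ ``by applying (\ref{fs}) in an inverse way'' --- so your version is, if anything, the more careful of the two.
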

\begin{proof} First observe that $ \mathcal{B}I_{m}(\mathbf{U};q,r/2,s/2)$ is a
symmetric function, then $\mathcal{B}I_{m}(\mathbf{HUH'};q,r/2,s/2) =
\mathcal{B}I_{m}(\mathbf{U};q,r/2,s/2)$. Thus\\[2ex]
$
 dF_{s}(\mathbf{U}) =  \mathcal{B}I_{m}(\mathbf{U};q,r/2,s/2) \etr\left(-\Half(
          \mathbf{\Omega}_{1}+ \mathbf{\Omega}_{2})\right)\\[-2ex]
$
\begin{equation}\label{intbeta1}
    \hspace{3.5cm} \times \ \
    \sum_{\kappa,\lambda; \ \phi}^{\infty}\frac{\left(\Half(r+s)\right)_{\phi}}{\left(\Half r
    \right)_{\kappa}\left(\Half s \right)_{\lambda} k! \ l!}\int_{\mathcal{O}(m)} h(\mathbf{HUH}')
    (d\mathbf{H})(d\mathbf{U}),
\end{equation}

for a function $h$. By (\ref{gen5.4}) observe that
{\small%
$$
  \int_{\mathcal{O}(m)} C_{\phi}^{\kappa,\lambda} \left(\Half \mathbf{\Omega}_{1} \mathbf{HUH}', \Half
    \mathbf{\Omega}_{2}(\mathbf{I}- \mathbf{HUH}')\right) (d\mathbf{H}) = \frac{C_{\phi}^{\kappa,\lambda}
    (\Half \mathbf{\Omega}_{1}, \Half \mathbf{\Omega}_{2}) C_{\phi}^{\kappa,\lambda} (\mathbf{U}, (\mathbf{I}-
    \mathbf{U}) )}{\theta_{\phi}^{\kappa,\lambda}C_{\phi}(\mathbf{I})}.
$$}
Then, by applying (\ref{fs}) in an inverse way, in (\ref{intbeta1}) we have
$$
  h(\mathbf{U}) = \theta_{\phi}^{\kappa,\lambda} \ C_{\phi}^{\kappa,\lambda}
  \left(\Half \mathbf{\Omega}_{1} \mathbf{U}, \Half \mathbf{\Omega}_{2}(\mathbf{I}- \mathbf{U})\right),
$$
from where the desired result is obtained.
\end{proof}
\begin{thm}\label{teo2}
Assume that $\mathbf{F} \sim \mathcal{B}II_{m}(q, r/2,s/2,\mathbf{\Omega}_{1},
\mathbf{\Omega}_{2})$.
Then we find that its density function is\\[2ex]
$
  dG_{_{\mathbf{F}}}(\mathbf{F}) =  \mathcal{B}II_{m}(\mathbf{F};q,r/2,s/2) \etr\left(-\Half(
    \mathbf{\Omega}_{1}+ \mathbf{\Omega}_{2})\right)
$
\par\indent\hfill\mbox{$\displaystyle{\times \ \
    \sum_{\kappa,\lambda; \phi}^{\infty}\frac{\Half(r+s)_{\phi} \ \theta_{\phi}^{\kappa,\lambda}}{\left(\Half r
    \right)_{\kappa}\left(\Half s \right)_{\lambda} k! \ l!} C_{\phi}^{\kappa,\lambda}
    \left(\Half \mathbf{\Omega}_{1}(\mathbf{I}+\mathbf{F})^{-1} \mathbf{F}, \Half \mathbf{\Omega}_{2}
    (\mathbf{I}+\mathbf{F})^{-1}\right) (d\mathbf{F})
    },$}
\par\noindent
where $\mathbf{F} > \mathbf{0}$ and $\R(s) > (m-1)$.
\end{thm}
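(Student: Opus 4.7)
The plan is to imitate the proof of Theorem \ref{teo1} almost line for line, replacing Lemma \ref{lem2} by Lemma \ref{lem3} as the known symmetrised density. First I would observe that $\mathcal{B}II_{m}(\mathbf{F};q,r/2,s/2)$ is itself invariant under $\mathbf{F}\mapsto \mathbf{HFH}'$ for every $\mathbf{H}\in\mathcal{O}(m)$, since it depends on $\mathbf{F}$ only through the eigenvalues (via $|\mathbf{G}|$) and through $|\mathbf{I}+\mathbf{F}|$. Together with $\etr(-\tfrac{1}{2}(\mathbf{\Omega}_1+\mathbf{\Omega}_2))$, which does not involve $\mathbf{F}$ at all, this factor can be pulled outside the orthogonal average.

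Next, I would write the candidate nonsymmetrised density as the expression displayed in the theorem but with an unknown function $h(\mathbf{F})$ replacing the invariant polynomial, apply the symmetrisation operator (\ref{fs}), and compare term by term with the series in Lemma \ref{lem3}. This yields the single requirement
\[
\int_{\mathcal{O}(m)} h(\mathbf{HFH}')(d\mathbf{H}) = \frac{C_\phi^{\kappa,\lambda}(\tfrac{1}{2}\mathbf{\Omega}_1,\tfrac{1}{2}\mathbf{\Omega}_2)\,C_\phi^{\kappa,\lambda}\bigl((\mathbf{I}+\mathbf{F})^{-1}\mathbf{F},(\mathbf{I}+\mathbf{F})^{-1}\bigr)}{C_\phi(\mathbf{I})}.
\]
The main computational step is the pair of elementary matrix identities
\[
(\mathbf{I}+\mathbf{HFH}')^{-1}\mathbf{HFH}' = \mathbf{H}(\mathbf{I}+\mathbf{F})^{-1}\mathbf{F}\mathbf{H}',\qquad (\mathbf{I}+\mathbf{HFH}')^{-1} = \mathbf{H}(\mathbf{I}+\mathbf{F})^{-1}\mathbf{H}',
\]
which follow immediately from $\mathbf{H}\mathbf{H}'=\mathbf{I}_m$. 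Setting $\mathbf{X}=(\mathbf{I}+\mathbf{F})^{-1}\mathbf{F}$, $\mathbf{Y}=(\mathbf{I}+\mathbf{F})^{-1}$, $\mathbf{A}=\tfrac{1}{2}\mathbf{\Omega}_1$ and $\mathbf{B}=\tfrac{1}{2}\mathbf{\Omega}_2$, the trial function $h(\mathbf{F})=\theta_\phi^{\kappa,\lambda}\,C_\phi^{\kappa,\lambda}(\mathbf{A}\mathbf{X},\mathbf{B}\mathbf{Y})$ satisfies $h(\mathbf{HFH}')=\theta_\phi^{\kappa,\lambda}\,C_\phi^{\kappa,\lambda}(\mathbf{A}\mathbf{H}\mathbf{X}\mathbf{H}',\mathbf{B}\mathbf{H}\mathbf{Y}\mathbf{H}')$. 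Changing variables $\mathbf{H}\mapsto\mathbf{H}'$ under the Haar measure (which is invariant) and applying Lemma \ref{lem:gen5.4} produces exactly the required right-hand side, with the $\theta_\phi^{\kappa,\lambda}$ factors cancelling.

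The only place where something could go wrong is the conversion between $\mathbf{A}\mathbf{H}\mathbf{X}\mathbf{H}'$ and the form $\mathbf{A}\mathbf{H}'\mathbf{X}\mathbf{H}$ demanded by Lemma \ref{lem:gen5.4}; this is precisely the step that relies on the left-right symmetry of the invariant measure on $\mathcal{O}(m)$. Once this is noted, solving for $h$ gives $h(\mathbf{F})=\theta_\phi^{\kappa,\lambda}\,C_\phi^{\kappa,\lambda}\bigl(\tfrac{1}{2}\mathbf{\Omega}_1(\mathbf{I}+\mathbf{F})^{-1}\mathbf{F},\tfrac{1}{2}\mathbf{\Omega}_2(\mathbf{I}+\mathbf{F})^{-1}\bigr)$, and substituting this back into the candidate density yields the expression stated in the theorem. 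No ingredient beyond Lemmas \ref{lem3} and \ref{lem:gen5.4} and the two orthogonal-conjugation identities above is needed.
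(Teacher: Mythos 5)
Your proposal is correct and is essentially the paper's own argument: the paper disposes of Theorem \ref{teo2} with the single line that the proof is parallel to that of Theorem \ref{teo1}, and what you have written is exactly that parallel proof carried out in detail (symmetrising a trial density, using the conjugation identities for $(\mathbf{I}+\mathbf{HFH}')^{-1}$, and invoking Lemma \ref{lem:gen5.4} to match the series in Lemma \ref{lem3}). Your explicit remark about passing from $\mathbf{H}\mathbf{X}\mathbf{H}'$ to the $\mathbf{H}'\mathbf{X}\mathbf{H}$ form via the invariance of the Haar measure is a detail the paper leaves implicit, but it is the right justification.
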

\begin{proof} The proof is parallel to that given for Theorem \ref{teo1}.
\end{proof}

In the following two corollaries we shall obtain, as particular cases, the noncentral
density functions of singular matrix variate beta type I(A),I(B), II(A) and II(B)
distributions.

\begin{cor} \label{corbetaI}
Under the hypothesis of Theorem \ref{teo1}:
\begin{description}
    \item[i)] If $\mathbf{\Omega}_{1} = \mathbf{0}$, i.e. $\mathbf{A} \sim
    \mathcal{PW}_{m}(r,\mathbf{I})$, then we obtain the noncentral singular matrix
    variate beta type I(A) distribution denoted as
    $$
       \mathbf{U} \sim \mathcal{B}I(A)_{m}(q, r/2, s/2, \mathbf{\Omega}_{2})
    $$
    Its density function is then given by\\[2ex]
    $
      dF_{\mathbf{U}}(\mathbf{U}) = \mathcal{B}I_{m}(\mathbf{U};q,r/2,s/2) \etr\left(-\Half
          \mathbf{\Omega}_{2}\right )
    $
    \par\noindent\hfill\mbox{$
        \times \ {}_{1}F_{1}\left(\Half (r+s); \Half s; \Half \mathbf{\Omega}_{2} (\mathbf{I}-
          \mathbf{U})\right)(d\mathbf{U})$}
          \par\indent
    \item[ii)] Alternatively, if $\mathbf{\Omega}_{2} = \mathbf{0}$, i.e. $\mathbf{B}
    \sim \mathcal{W}_{m}(s,\mathbf{I})$, then we obtain the noncentral singular matrix variate beta
    type I(B) distribution denoted as $\mathbf{U} \sim \mathcal{B}I(B)_{m}(q, r/2, s/2,
    \mathbf{\Omega}_{1})$, for which its density function is
    $$
      dF_{_{\mathbf{U}}}(\mathbf{U}) = \mathcal{B}I_{m}(\mathbf{U};q,r/2,s/2) \etr\left(-\Half
          \mathbf{\Omega}_{1}\right ) {}_{1}F_{1}\left(\Half (r+s); \Half s; \Half \mathbf{\Omega}_{1}
          \mathbf{U}\right)(d\mathbf{U})
    $$
\end{description}
with $\mathbf{0} \leq \mathbf{U} < \mathbf{I}$, $\R(s) > (m-1)$ and where
${}_{1}F_{1}(\cdot)$ is the hypergeometric function with matrix arguments, see
\cite[definitions 7.3.1, p. 258]{mh:82}.
\end{cor}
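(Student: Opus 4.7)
The plan is to specialize the density from Theorem \ref{teo1} to the cases where one of the noncentrality matrices vanishes, and then exploit the well-known degeneration property of the invariant polynomials $C_\phi^{\kappa,\lambda}$ when one of their matrix arguments is the zero matrix.

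For part (i), setting $\mathbf{\Omega}_1 = \mathbf{0}$ in Theorem \ref{teo1} forces the factor $C_\phi^{\kappa,\lambda}(\Half\mathbf{\Omega}_1\mathbf{U},\Half\mathbf{\Omega}_2(\mathbf{I}-\mathbf{U}))$ to become $C_\phi^{\kappa,\lambda}(\mathbf{0},\Half\mathbf{\Omega}_2(\mathbf{I}-\mathbf{U}))$. Following \cite{da:80} (see also \cite{ch:80,chd:86}), the invariant polynomial $C_\phi^{\kappa,\lambda}(\mathbf{0},\mathbf{Y})$ vanishes unless $\kappa$ is the empty partition, in which case necessarily $\phi=\lambda$ and the polynomial reduces to the ordinary zonal polynomial $C_\lambda(\mathbf{Y})$. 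Substituting $\kappa=\mathbf{0}$ gives $(\Half r)_\kappa=1$, $k!=1$, and $\theta_\lambda^{\mathbf{0},\lambda}=C_\lambda(\mathbf{I})/C_\lambda(\mathbf{I})=1$, so the triple series collapses to the single series
\begin{equation*}
\sum_{\lambda}\frac{(\Half(r+s))_{\lambda}}{(\Half s)_{\lambda}\, l!}\,
C_{\lambda}\!\left(\Half\mathbf{\Omega}_2(\mathbf{I}-\mathbf{U})\right),
\end{equation*}
which by \cite[Def.~7.3.1]{mh:82} is precisely ${}_{1}F_{1}(\Half(r+s);\Half s;\Half\mathbf{\Omega}_2(\mathbf{I}-\mathbf{U}))$, yielding (i).

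Part (ii) is the mirror image: setting $\mathbf{\Omega}_2=\mathbf{0}$ forces vanishing of $C_\phi^{\kappa,\lambda}(\cdot,\mathbf{0})$ unless $\lambda$ is the empty partition and $\phi=\kappa$, in which case $C_\kappa^{\kappa,\mathbf{0}}(\mathbf{X},\mathbf{0})=C_\kappa(\mathbf{X})$. The same accounting gives $(\Half s)_\lambda=1$, $l!=1$, $\theta_\kappa^{\kappa,\mathbf{0}}=1$, and the remaining single sum over $\kappa$ is ${}_{1}F_{1}(\Half(r+s);\Half s;\Half\mathbf{\Omega}_1\mathbf{U})$.

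The only delicate step is the invocation of the degeneration property $C_\phi^{\kappa,\lambda}(\mathbf{0},\mathbf{Y})=\delta_{\kappa,\mathbf{0}}\delta_{\phi,\lambda}C_\lambda(\mathbf{Y})$; this is not a calculation but a structural fact about Davis' invariant polynomials that must be cited rather than reproved. Everything else is bookkeeping on Pochhammer symbols and on $\theta_\phi^{\kappa,\lambda}$. Note that although the $(\Half r)_\kappa$ in the denominator of Theorem \ref{teo1} might appear problematic when $r<m$ (as in the singular case), only $\kappa=\mathbf{0}$ survives in part (i), so this factor is simply $1$; the analogous remark applies to part (ii).
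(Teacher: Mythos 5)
Your route is the same as the paper's: the published proof is a one‑line appeal to the basic degeneration properties of the invariant polynomials (\cite[eqs.\ (2.1) and (2.3)]{da:79}; \cite[eqs.\ (3.3) and (3.6)]{ch:80}), which are exactly the identities $C_{\phi}^{\kappa,\lambda}(\mathbf{0},\mathbf{Y})=\delta_{\kappa,0}\,\delta_{\phi,\lambda}\,C_{\lambda}(\mathbf{Y})$ and its mirror image that you invoke, so spelling out the collapse of the triple sum is precisely the intended argument. Your part (i) carries this out correctly and completely, including the observation that $\theta_{\lambda}^{0,\lambda}=1$.

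Part (ii), however, is internally inconsistent. After setting $\mathbf{\Omega}_{2}=\mathbf{0}$ only the terms with $\lambda=0$ and $\phi=\kappa$ survive, so the factor that becomes $1$ is $(\Half s)_{\lambda}$, while $(\Half r)_{\kappa}$ remains in the denominator; the collapsed series is therefore
$$
\sum_{\kappa}\frac{\left(\Half(r+s)\right)_{\kappa}}{\left(\Half r\right)_{\kappa}\,k!}\,
C_{\kappa}\!\left(\Half\mathbf{\Omega}_{1}\mathbf{U}\right)
={}_{1}F_{1}\!\left(\Half(r+s);\Half r;\Half\mathbf{\Omega}_{1}\mathbf{U}\right),
$$
with lower parameter $\Half r$, not $\Half s$ as you wrote. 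Your final identification simply reproduces the $\Half s$ appearing in the corollary's statement (which, when checked against Constantine's noncentral beta density, looks like a typo in the paper itself), but it contradicts the bookkeeping you performed one sentence earlier. For the same reason your closing remark that the ``analogous'' cancellation disposes of $(\Half r)_{\kappa}$ in part (ii) is not right: that factor does not disappear there; it becomes the lower parameter of the ${}_{1}F_{1}$, and it is exactly the place where the singular case $r<m$ genuinely requires care.
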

\begin{proof} The density functions in two items are a consequence of the basic
properties of invariant polynomials, see \cite[equations (2.1) and (2.3)]{da:79}, see
also \cite[equations (3.3) and (3.6)]{ch:80}.
\end{proof}

\begin{cor} \label{corobetaII}
Under the conditions of Theorem  \ref{teo2}:
\begin{description}
    \item[i)] if $\mathbf{\Omega}_{1} = \mathbf{0}$, i.e. $\mathbf{A} \sim
    \mathcal{PW}_{m}(r,\mathbf{I})$, then we obtain the noncentral singular matrix
    variate beta type II(A) distribution denoted as
    $$
      \mathbf{F} \sim \mathcal{B}II(A)_{m}(q, r/2, s/2, \mathbf{\Omega}_{2}),
    $$
    and its  density function is\\[2ex]
    $
      dG_{\mathbf{F}}(\mathbf{F}) = \mathcal{B}II_{m}(\mathbf{F};q,r/2,s/2) \etr\left(-\Half
          \mathbf{\Omega}_{2}\right )
    $
    \par\noindent\hfill\mbox{$ \times \
        {}_{1}F_{1}\left(\Half (r+s); \Half s; \Half \mathbf{\Omega}_{2} (\mathbf{I}
          + \mathbf{F})^{-1}\right)(d\mathbf{F})$}
    \par\indent
    \item[ii)] if $\mathbf{\Omega}_{2} = \mathbf{0}$, i.e. $\mathbf{B}
    \sim \mathcal{W}_{m}(s,\mathbf{I})$, then we obtain the noncentral singular matrix
    variate beta type II(B) distribution denoted as
    $$
      \mathbf{F} \sim \mathcal{B}II_{1}(B)_{m}(q, r/2, s/2, \mathbf{\Omega}_{1}),
    $$
    for which its  density function is\\[2ex]
        $
      dG_{_{\mathbf{F}}}(\mathbf{F}) = \mathcal{B}II_{m}(\mathbf{F};q,r/2,s/2) \etr\left(-\Half
          \mathbf{\Omega}_{1}\right )
    $
    \par\noindent\hfill\mbox{$\times \ {}_{1}F_{1}\left(\Half (r+s); \Half s;
    \Half \mathbf{\Omega}_{1}(\mathbf{I}+\mathbf{F})^{-1}\mathbf{F}\right)(d\mathbf{F})$}
\end{description}
with $\mathbf{0} \leq \mathbf{F}$ and $\R(s > (m-1))$.
\end{cor}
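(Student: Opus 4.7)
The plan is to mirror the argument used for Corollary \ref{corbetaI}, reducing the double series in Theorem \ref{teo2} to a single hypergeometric series by exploiting the standard vanishing properties of the invariant polynomials $C_{\phi}^{\kappa,\lambda}$ at a zero argument. Throughout, I would use the two facts from \cite{da:79} (and restated in \cite{ch:80}) that (a) $C_{\phi}^{\kappa,\lambda}(\mathbf{0},\mathbf{B})$ vanishes unless $\kappa$ is the empty partition and $\phi=\lambda$, in which case it collapses to the zonal polynomial $C_{\lambda}(\mathbf{B})$, with the symmetric statement for $C_{\phi}^{\kappa,\lambda}(\mathbf{A},\mathbf{0})$; and (b) the normalisation $\theta_{\phi}^{\kappa,\lambda}=C_{\phi}^{\kappa,\lambda}(\mathbf{I},\mathbf{I})/C_{\phi}(\mathbf{I})$ evaluates to $1$ in the trivial slice ($\kappa=\emptyset,\phi=\lambda$ or $\lambda=\emptyset,\phi=\kappa$), since $C_{\lambda}^{\emptyset,\lambda}(\mathbf{I},\mathbf{I})=C_{\lambda}(\mathbf{I})$.

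For item (i) I would set $\mathbf{\Omega}_{1}=\mathbf{0}$ inside the density of Theorem \ref{teo2}. The first matrix argument of $C_{\phi}^{\kappa,\lambda}$ then becomes $\mathbf{0}$, so by (a) only the terms with $\kappa=\emptyset$ (forcing $k=0$ and $\phi=\lambda$) survive in the triple sum. In those surviving terms $(\tfrac{1}{2}r)_{\kappa}=1$, $k!=1$, $\theta_{\lambda}^{\emptyset,\lambda}=1$ by (b), and the invariant polynomial reduces to $C_{\lambda}\bigl(\tfrac{1}{2}\mathbf{\Omega}_{2}(\mathbf{I}+\mathbf{F})^{-1}\bigr)$. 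What remains is
\[
\sum_{\lambda}\frac{(\tfrac{1}{2}(r+s))_{\lambda}}{(\tfrac{1}{2}s)_{\lambda}\,l!}\,C_{\lambda}\!\left(\tfrac{1}{2}\mathbf{\Omega}_{2}(\mathbf{I}+\mathbf{F})^{-1}\right),
\]
which I would recognise as the confluent hypergeometric series of matrix argument ${}_{1}F_{1}\bigl(\tfrac{1}{2}(r+s);\tfrac{1}{2}s;\tfrac{1}{2}\mathbf{\Omega}_{2}(\mathbf{I}+\mathbf{F})^{-1}\bigr)$ by \cite[Def.~7.3.1]{mh:82}. The factor $\etr(-\tfrac{1}{2}\mathbf{\Omega}_{1})=1$ drops out, leaving the stated density.

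Item (ii) is handled by the symmetric procedure: setting $\mathbf{\Omega}_{2}=\mathbf{0}$ kills every term of the triple sum except those with $\lambda=\emptyset$, $\phi=\kappa$, $l=0$, $(\tfrac{1}{2}s)_{\lambda}=1$, and $\theta_{\kappa}^{\kappa,\emptyset}=1$. The surviving invariant polynomial reduces to $C_{\kappa}\bigl(\tfrac{1}{2}\mathbf{\Omega}_{1}(\mathbf{I}+\mathbf{F})^{-1}\mathbf{F}\bigr)$, and the single sum assembles into the claimed ${}_{1}F_{1}$.

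I do not anticipate a genuine obstacle: the whole argument is a routine specialisation, and the only point that requires a little care is verifying $\theta_{\lambda}^{\emptyset,\lambda}=1$ (and its mirror) so that the coefficient in front of the surviving $C_\lambda$ matches the Pochhammer-ratio form of ${}_{1}F_{1}$; this follows immediately from the definition of $\theta_{\phi}^{\kappa,\lambda}$ in Lemma \ref{lem:gen5.4} together with the boundary case of the invariant polynomials in \cite{da:79}. Thus the corollary follows directly from Theorem \ref{teo2}, exactly as Corollary \ref{corbetaI} follows from Theorem \ref{teo1}.
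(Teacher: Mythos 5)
Your proposal is correct and follows essentially the same route as the paper: the authors dispose of both items by citing the ``basic properties of invariant polynomials'' (equations (2.1) and (2.3) of \cite{da:79}, equivalently (3.3) and (3.6) of \cite{ch:80}), which are precisely the vanishing/collapse identities $C_{\phi}^{\kappa,\lambda}(\mathbf{0},\mathbf{B})=0$ unless $\kappa=\emptyset$, $\phi=\lambda$ (and its mirror) together with $\theta_{\lambda}^{\emptyset,\lambda}=1$ that you use to reduce the triple sum of Theorem \ref{teo2} to a single ${}_{1}F_{1}$ series. You have simply written out explicitly the specialisation the paper leaves implicit.
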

\begin{proof} The proof is analogous to that given for Corollary
\ref{corbetaI}.
\end{proof}

\section*{Conclusions}

\cite{ch:80}, \cite{ch:81} and \cite{da:79} have found the symmetrised doubly
noncentral density functions of the nonsingular matrix variate beta type I and II
distributions. However, the question of nonsymmetrised density functions (or simply
density functions) remained to be resolved. In this paper, by applying Greenacre's
definition of symmetrised function of \citep{g:73} in an inverse way, we respond to
these two open problems with respect to singular and nonsingular cases. Furthermore,
in another way to the method given in \cite{dggj:07} and in \cite{dggj:08a}, we
obtain the noncentral density functions of singular matrix variate beta type
I(A),I(B), II(A) and II(B) distributions, from where, implicitly, we resolve the
integral proposed by \cite{c:63}, \cite{k:70} and reconsidered in \cite[p.
191]{f:85}, see also \cite{dggj:07}, in the singular case, and in the nonsingular
one, of course, by simply taking $q = m$.

\section*{Acknowledgements} This research work was partially supported  by
CONACYT-Mexico, research grant 81512 and by IDI-Spain, grant MTM2005-09209. This
paper was written during J. A. D\'{\i}az- Garc\'{\i}a's stay as a visiting professor
at the Department of Statistics and O. R. of the University of Granada, Spain.

\end{document}